\newcommand\R{{\mathbb{R}}}
\newcommand\C{{\mathbb{C}}}
\renewcommand\P{{\mathbf{P}}}
\newcommand\E{{\mathbf{E}}}
\renewcommand\Im{{\operatorname{Im}}}
\renewcommand\Re{{\operatorname{Re}}}
\newcommand\eps{{\varepsilon}}
\newcommand\Dyson{{\operatorname{Sine}}}
\newcommand\condo{{{\bf C0}}}
\subjclass{15A52}
\newcommand\cl{{\operatorname{cl}}}
\renewcommand\sc{{\operatorname{sc}}}
\theoremstyle{plain}
  \newtheorem{theorem}{Theorem}
  \newtheorem{lemma}[theorem]{Lemma}
  \newtheorem{corollary}[theorem]{Corollary}
\theoremstyle{definition}
  \newtheorem{definition}[theorem]{Definition}
  \newtheorem{example}[theorem]{Example}
  \newtheorem{remark}[theorem]{Remark}
\begin{document}

\title[The Four Moment Theorem]{Random matrices:\\  The Four Moment Theorem for Wigner ensembles}

\author{Terence Tao}
\address{Department of Mathematics, UCLA, Los Angeles CA 90095-1555}
\email{tao@math.ucla.edu}
\thanks{T. Tao is supported by a grant from the MacArthur Foundation, by NSF grant DMS-0649473, and by the NSF Waterman award.}

\author{Van Vu}
\address{Department of Mathematics, Rutgers, Piscataway, NJ 08854}
\email{vanvu@math.rutgers.edu}
\thanks{V. Vu is supported by research grants DMS-0901216 and AFOSAR-FA-9550-09-1-0167.}

\begin{abstract}  We survey some recent progress on rigorously establishing the universality of various spectral statistics of Wigner random matrix ensembles, focusing in particular on the Four Moment Theorem and its 
applications.  
\end{abstract}

\maketitle

\setcounter{tocdepth}{2}

\section{Introduction}


The purpose of this paper is to survey the \emph{Four Moment Theorem} and its applications in understanding the asymptotic spectral properties of random matrix ensembles of Wigner type.  Due to limitations of space, this survey will be far from exhaustive; an extended version of this survey will appear elsewhere.  (See also \cite{Erd}, \cite{Alice}, \cite{schlein} for some recent surveys in this area.)

To simplify the exposition (at the expense of stating the results in maximum generality), we shall restrict attention to a model class of random matrix ensembles, in which we assume somewhat more decay and identical distribution hypotheses than are strictly necessary for the main results.

\begin{definition}[Wigner matrices]\label{def:Wignermatrix}  Let $n \geq 1$ be an integer (which we view as a parameter going off to infinity). An $n \times n$ \emph{Wigner Hermitian matrix} $M_n$ is defined to be a  random Hermitian $n \times n$ matrix $M_n = (\xi_{ij})_{1 \leq i,j \leq n}$, in which the $\xi_{ij}$ for $1 \leq i \leq j \leq n$ are jointly independent with $\xi_{ji} = \overline{\xi_{ij}}$ (in particular, the $\xi_{ii}$ are real-valued).  For $1 \leq i < j \leq n$, we require that the $\xi_{ij}$ have mean zero and variance one, while for $1 \leq i=j \leq n$ we require that the $\xi_{ij}$ have mean zero and variance $\sigma^2$ for some $\sigma^2>0$ independent of $i,j,n$.   To simplify some of the statements of the results here, we will also assume that the $\xi_{ij} \equiv \xi$ are identically distributed for $i < j$, and the $\xi_{ii} \equiv \xi'$ are also identically distributed for $i=j$, and furthermore that the real and imaginary parts of $\xi$ are independent.  We refer to the distributions $\Re \xi$, $\Im \xi$, and $\xi'$ as the \emph{atom distributions} of $M_n$.

We say that the Wigner matrix ensemble \emph{obeys Condition {\condo}} if we have the exponential decay condition
\begin{equation*}
\P(|\xi_{ij}|\ge t^C) \le e^{-t} 
\end{equation*}
for all $1 \leq i,j \leq n$ and $t \ge C'$, and some constants $C, C'$ (independent of $i,j,n$).  
\end{definition}

We refer to the matrix $W_n := \frac{1}{\sqrt{n}} M_n$ as the \emph{coarse-scale normalised Wigner Hermitian matrix}, and $A_n := \sqrt{n} M_n$ as the \emph{fine-scale normalised Wigner Hermitian matrix}.

\begin{example}[Invariant ensembles]\label{gue}  An important special case of a Wigner Hermitian matrix $M_n$ is the \emph{gaussian unitary ensemble} (GUE), in which $\xi_{ij} \equiv N(0,1)_\C$ are complex gaussians with mean zero and variance one for $i \neq j$, and $\xi_{ii} \equiv N(0,1)_\R$ are real gaussians with mean zero and variance one for $1 \leq i \leq n$ (thus $\sigma^2 = 1$ in this case).  Another important special case is the \emph{gaussian orthogonal ensemble} (GOE), in which $\xi_{ij} \equiv N(0,1)_\R$ are real gaussians with mean zero and variance one for $i \neq j$, and $\xi_{ii} \equiv N(0,2)_\R$ are real gaussians with mean zero and variance $2$ for $1 \leq i \leq n$ (thus $\sigma^2=2$ in this case).  These ensembles obey Condition {\condo}.  These ensembles are invariant with respect to conjugation by unitary and orthogonal matrices respectively.
\end{example}

Given an $n \times n$ Hermitian matrix $A$, we will denote its $n$ eigenvalues in increasing order as
$$ \lambda_1(A) \leq \ldots \leq \lambda_n(A),$$
and write $\lambda(A) := (\lambda_1(A),\ldots,\lambda_n(A))$.  We also let $u_1(A),\ldots,u_n(A) \in \C^n$ be an orthonormal basis of eigenvectors of $A$ with $A u_i(A) = \lambda_i(A) u_i(A)$.

We also introduce the \emph{eigenvalue counting function}
\begin{equation}\label{eigencount}
 N_I(A) := | \{ 1 \leq i \leq n: \lambda_i(A) \in I \}|
\end{equation}
for any interval $I \subset \R$.  We will be interested in both the \emph{coarse-scale} eigenvalue counting function $N_I(W_n)$ and the \emph{fine-scale} eigenvalue counting function $N_I(A_n)$.

\section{The local semi-circular law}\label{semi-sec}

The most fundamental result about the spectrum of Wigner matrices is the \emph{Wigner semi-circular law}.  We state here a powerful local version of this law, due to Erd\H os, Schlein, and Yau \cite{ESY1,ESY2,ESY3} (see also \cite{EYY}, \cite{EYY2}, \cite{ekyy}, \cite{ekyy2} for further refinements).   Denote by $\rho_{sc}$ the semi-circle density function with
support on $[-2,2]$,
\begin{equation}\label{semi}
 \rho_{sc} (x):= \frac{1}{2\pi} (4-x^2)_+^{1/2}.
\end{equation}

\begin{theorem}[Local semi-circle law]\label{lsc}  Let $M_n$ be a Wigner matrix obeying Condition \condo, let $\eps > 0$, and let $I \subset \R$ be an interval of length $|I| \geq n^{-1+\eps}$.  Then with overwhelming probability\footnote{By this, we mean that the event occurs with probability $1 - O_A(n^{-A})$ for each $A>0$.}, one has\footnote{We use the asymptotic notation $o(X)$ to denote any quantity that goes to zero as $n \to \infty$ when divided by $X$, and $O(X)$ to denote any quantity bounded in magnitude by $CX$, where $C$ is a constant independent of $n$.}
\begin{equation}\label{nawi}
 N_I(W_n) = n\int_I \rho_{\sc}(x)\ dx + o(n|I|).
\end{equation}
\end{theorem}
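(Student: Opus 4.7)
The plan is to control the Stieltjes transform
$$s_n(z) := \frac{1}{n}\operatorname{tr}(W_n - zI)^{-1} = \frac{1}{n}\sum_{i=1}^n \frac{1}{\lambda_i(W_n) - z}$$
at spectral parameters $z = E + i\eta$ with $\eta$ as small as $n^{-1+\eps'}$ for some $\eps' < \eps$, and then transfer the resulting bound to eigenvalue counts. Throughout, write $G(z) := (W_n - zI)^{-1}$ and let $s(z)$ denote the Stieltjes transform of $\rho_{sc}$, characterized as the unique root of $s(z)^2 + z s(z) + 1 = 0$ in the upper half-plane.

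First I would derive the self-consistent equation for $s_n$. By the Schur complement formula, each diagonal resolvent entry factors as
$$G_{ii}(z) = \frac{1}{(W_n)_{ii} - z - a_i^\ast G^{(i)}(z) a_i},$$
where $G^{(i)}$ is the resolvent of the minor obtained by removing the $i$-th row and column and $a_i \in \C^{n-1}$ collects the entries $\xi_{ij}/\sqrt{n}$ for $j \neq i$. Since the $\xi_{ij}$ are independent of $G^{(i)}$ with mean zero and unit variance, $\mathbf{E}_{a_i}[a_i^\ast G^{(i)} a_i] = \frac{1}{n}\operatorname{tr} G^{(i)}$, and eigenvalue interlacing gives $\frac{1}{n}\operatorname{tr} G^{(i)} = s_n(z) + O((n\eta)^{-1})$. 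If the quadratic form concentrates around its mean, then $G_{ii}(z) \approx -(z + s_n(z))^{-1}$; averaging over $i$ yields the perturbed fixed point equation $s_n(z)^2 + z s_n(z) + 1 = o(1)$, whose only stable solution near $s(z)$ is $s(z)$ itself.

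The technical heart is the concentration step. Using a Hanson--Wright-type large deviation bound, justified for our entries by Condition \condo, the fluctuation $a_i^\ast G^{(i)} a_i - \frac{1}{n}\operatorname{tr} G^{(i)}$ has typical size $\frac{1}{n}(\operatorname{tr}|G^{(i)}|^2)^{1/2}$. The identity $\frac{1}{n}\operatorname{tr}|G(z)|^2 = \Im s_n(z)/\eta$ turns this into a self-improving estimate: once an a priori bound $\Im s_n(z) \lesssim 1$ is available at scale $\eta$, the quadratic form concentrates at rate $(n\eta)^{-1/2}$, and the self-consistent equation closes with error $o(1)$ so long as $\eta \gg 1/n$. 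A standard stability analysis of $s \mapsto s^2 + zs + 1$ then forces $s_n(z) = s(z) + o(1)$.

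To propagate down from macroscopic to mesoscopic scales, I would run a continuity/bootstrap argument in $\eta$. At scale $\eta = 1$, the global semicircle law (proved by classical moment methods) supplies $s_n(z) \to s(z)$ and the a priori bound $\Im s_n(z) = O(1)$. A dyadic descent in $\eta$ combined with a union bound over a polynomial net in $z$ pushes the estimate down to $\eta = n^{-1+\eps'}$ with overwhelming probability. Finally, write
$$N_I(W_n) - n\int_I \rho_{sc}(x)\,dx = n\int_{\R} \Im[s_n(E+i\eta) - s(E+i\eta)]\,\chi_I(E,\eta)\,dE\,d\eta$$
via a Helffer--Sjöstrand representation with a smooth cutoff $\chi_I$ adapted to $I$ and truncated at height $\eta = n^{-1+\eps'}$, and insert the Stieltjes bound to obtain the $o(n|I|)$ error.

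The main obstacle is the mesoscopic step: simultaneously controlling all $n$ quadratic forms $a_i^\ast G^{(i)} a_i$, uniformly over a net in $z$, without any loss of polynomial factors in $1/\eta$, and threading the resulting bounds through the self-improving structure of the identity $\frac{1}{n}\operatorname{tr}|G|^2 = \Im s_n/\eta$. This is precisely where the bulk of the effort in the Erd\H{o}s--Schlein--Yau program is invested.
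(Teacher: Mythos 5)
Your proposal is correct in approach and coincides with the method the paper itself points to: the paper gives no in-text proof, deferring to \cite[Theorem 1.10]{TVlocal2} and the Erd\H os--Schlein--Yau works with the remark that the proofs are based on the Stieltjes transform method, and your outline (Schur complement, concentration of the quadratic forms at rate $(n\eta)^{-1/2}$ via $\frac{1}{n}\tr|G|^2 = \Im s_n/\eta$, dyadic bootstrap in $\eta$ down to $\eta \sim n^{-1+\eps'}$, then a smoothed transfer to $N_I$) is exactly that standard argument. The only caveat is that, like the paper, you leave the genuinely hard step --- uniform control of all $n$ quadratic forms over a net in $z$ threaded through the self-improving bootstrap --- to the cited literature, which you acknowledge explicitly.
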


\begin{proof} See e.g. \cite[Theorem 1.10]{TVlocal2}.  For the most precise estimates currently known of this type (and with the weakest decay hypotheses on the entries), see \cite{ekyy}.  The proofs are based on the Stieltjes transform method; see e.g. \cite{BSbook} for an exposition of this method.
\end{proof}

A variant of Theorem \ref{lsc}, established subsequently\footnote{The result in \cite{EYY2} actually proves a more precise result that also gives sharp results in the edge of the spectrum, though due to the sparser nature of the $\lambda_i^\cl(W_n)$ in that case, the error term $O_\eps(n^{-1+\eps})$ must be enlarged.} in \cite{EYY2},  is the extremely useful \emph{eigenvalue rigidity property}
\begin{equation}\label{eigenrigid}
 \lambda_i(W_n) = \lambda_i^{\cl}(W_n) + O_\eps(n^{-1+\eps}),
\end{equation}
valid with overwhelming probability in the bulk range $\delta n \leq i \leq (1-\delta) n$ for any fixed $\delta>0$ (and assuming Condition \condo).   This result is key in some of the strongest applications of the theory.  Here the \emph{classical location} $\lambda_i^{\cl}(W_n)$ of the $i^{\operatorname{th}}$ eigenvalue is the element of $[-2,2]$ defined by the formula
$$ \int_{-2}^{\lambda_i^{\cl}(W_n)} \rho_\sc(y)\ dy = \frac{i}{n}.$$

Roughly speaking, results such as Theorem \ref{lsc} and \eqref{eigenrigid} control the spectrum of $W_n$ at scales $n^{-1+\eps}$ and above.  However, they break down at the fine scale $n^{-1}$; indeed, for intervals $I$ of length $|I|=O(1/n)$, one has $n\int_I \rho_{\sc}(x)\ dx = O(1)$, while $N_I(W_n)$ is clearly a natural number, so that one can no longer expect an asymptotic of the form \eqref{nawi}.  Nevertheless, local semicircle laws are an essential part of the fine-scale theory.  One particularly useful consequence of these laws is that of \emph{eigenvector delocalisation}:

\begin{corollary}[Eigenvalue delocalisation]\label{deloc} Let $M_n$ be a Wigner matrix obeying Condition \condo, and let $\eps > 0$.  Then with overwhelming probability, one has $u_i(W_n)^* e_j = O( n^{-1/2 + \eps} )$ for all $1 \leq i,j \leq n$, where the $e_1,\ldots,e_n$ are the standard basis of $\C^n$.
\end{corollary}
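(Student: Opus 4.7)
The plan is to combine a Schur-complement identity for the eigenvector component $|u_i(W_n)^* e_j|^2$ with the local semi-circle law applied to the principal minor of $M_n$ obtained by deleting the $j$th row and column, followed by a concentration estimate for a quadratic form in an independent random vector. The strategy has three substantive steps: derive the identity, use the local law for the minor to show that many of its eigenvalues lie near $\lambda_i(W_n)$, and then use the independence of $X_j$ from the minor to lower-bound the resulting quadratic form.

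First, fix $i,j$. After conjugating by a permutation I decompose
$$
M_n = \begin{pmatrix} \xi_{jj} & X_j^* \\ X_j & M_n^{(j)} \end{pmatrix},
$$
where $X_j \in \C^{n-1}$ is the $j$th column of $M_n$ with the diagonal entry removed and $M_n^{(j)}$ is the corresponding $(n-1)\times(n-1)$ minor; crucially $X_j$ and $M_n^{(j)}$ are independent. Writing the unit eigenvector as $u_i(W_n) = (c,v)$ with $|c| = |u_i(W_n)^* e_j|$, solving the bottom block $X_j c + M_n^{(j)} v = \sqrt{n}\,\lambda_i(W_n)\, v$ for $v$, enforcing $|c|^2 + \|v\|^2 = 1$, and expanding in the spectral basis $M_n^{(j)} = \sum_k \tilde\mu_k w_k w_k^*$ yields the identity
\begin{equation}\label{schur-ident}
|u_i(W_n)^* e_j|^2 \;=\; \left( 1 + \frac{1}{n}\sum_k \frac{|w_k^* X_j|^2}{(\lambda_i(W_n) - \nu_k)^2}\right)^{-1},
\end{equation}
where $\nu_k := \tilde\mu_k/\sqrt n$ are the eigenvalues of $M_n^{(j)}/\sqrt n$. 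It will suffice to show that the sum in \eqref{schur-ident} is $\gtrsim n^{1-2\eps'}$ with overwhelming probability, for some $\eps' < \eps$.

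Next, I would introduce the \emph{deterministic} interval $I := [\lambda_i^{\cl}(W_n) - n^{-1+\eps'},\, \lambda_i^{\cl}(W_n) + n^{-1+\eps'}]$. For concreteness, assume $i$ lies in the bulk; the edge case is structurally identical once the window and eigenvalue counts are adjusted to the edge scale $n^{-2/3+\eps'}$. Applying Theorem~\ref{lsc} to the Wigner matrix $M_n^{(j)}/\sqrt{n-1}$ (which differs from $M_n^{(j)}/\sqrt n$ by an irrelevant $1+O(1/n)$ rescaling), together with eigenvalue rigidity \eqref{eigenrigid} applied to $W_n$, gives with overwhelming probability that $\lambda_i(W_n) \in I$ and that the number $r$ of indices $k$ with $\nu_k \in I$ is $\Theta(n^{\eps'})$. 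For each such $k$ one has $(\lambda_i(W_n) - \nu_k)^2 \leq 4\, n^{-2+2\eps'}$, so writing $P$ for the orthogonal projection onto $\Span\{w_k : \nu_k \in I\}$ gives
$$
\frac{1}{n}\sum_k \frac{|w_k^* X_j|^2}{(\lambda_i(W_n) - \nu_k)^2} \;\geq\; \frac{n^{1-2\eps'}}{4}\, \|P X_j\|^2.
$$

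Finally, I would exploit that $P$ is a deterministic function of $M_n^{(j)}$ and hence independent of $X_j$. Since $X_j$ has independent coordinates of mean zero and variance $\Theta(1)$ with exponential tails from Condition~\condo, one has $\E \|P X_j\|^2 = \rank(P) = r$, and the Hanson--Wright inequality (or a standard quadratic-form concentration for sub-exponential variables) yields $\|P X_j\|^2 \geq r/2 \geq 1$ with overwhelming probability. Substituting into \eqref{schur-ident} gives $|u_i(W_n)^* e_j|^2 \lesssim n^{-1+2\eps'}$, hence $|u_i(W_n)^* e_j| = O(n^{-1/2+\eps})$ after choosing $\eps'$ small enough; a union bound over the $n^2$ pairs $(i,j)$ then promotes this to the uniform bound claimed. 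I expect the main obstacle to be the careful bookkeeping of the probabilistic dependencies between $\lambda_i(W_n)$, the spectral data of $M_n^{(j)}$, and the vector $X_j$: the whole point of invoking eigenvalue rigidity is to replace the random window around $\lambda_i(W_n)$ by a deterministic window around $\lambda_i^{\cl}(W_n)$, which is precisely what decouples the local semi-circle input (depending only on $M_n^{(j)}$) from the concentration input (using independence of $X_j$ and $M_n^{(j)}$).
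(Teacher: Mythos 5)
Your Schur-complement identity is correct (it is the standard formula $|u_i(W_n)^*e_j|^2 = \bigl(1+\frac{1}{n}\sum_k |w_k^* X_j|^2 (\lambda_i(W_n)-\nu_k)^{-2}\bigr)^{-1}$ obtained by solving the minor block of the eigenvector equation), and in the bulk your argument is sound: a deterministic window of length $\sim n^{-1+\eps'}$ around $\lambda_i^{\cl}(W_n)$, Theorem \ref{lsc} applied to the minor to get $\Theta(n^{\eps'})$ eigenvalues $\nu_k$ in the window, rigidity \eqref{eigenrigid} to place $\lambda_i(W_n)$ in the same window, and concentration of $\|PX_j\|^2$ using the independence of $X_j$ from $M_n^{(j)}$. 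This is a genuinely different route from the one sketched in the paper, which controls the resolvent and uses $|u_i(W_n)^*e_j|^2 \leq \eta \, \Im((W_n-z)^{-1})_{jj}$ at scale $\eta = n^{-1+\eps}$; your route is essentially the original Erd\H{o}s--Schlein--Yau / Tao--Vu eigenvector-component argument, and for bulk indices it is perfectly adequate.

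The genuine gap is your claim that the edge case is ``structurally identical.'' It is not, and the corollary as stated covers all $1 \leq i \leq n$. With the edge window $|I| \sim n^{-2/3+\eps'}$ around $\lambda_i^{\cl}(W_n) \approx 2$, the semicircle mass gives only $N_I \sim n|I|^{3/2} \sim n^{3\eps'/2}$ eigenvalues, while the denominators are only bounded by $|I|^2 \sim n^{-4/3+2\eps'}$; plugging into your identity yields a lower bound of order $n^{1/3 - O(\eps')}$ for the sum, hence only $|u_i(W_n)^*e_j| \lesssim n^{-1/6+O(\eps')}$, far short of $n^{-1/2+\eps}$. This is not a bookkeeping issue: near the edge the typical eigenvalue spacing is $n^{-2/3}$, so no argument that only locates eigenvalues to rigidity accuracy and counts them in windows can force the sum up to $n^{1-o(1)}$ there; one must exploit the much finer interlacing between the spectra of $M_n$ and $M_n^{(j)}$ (or, equivalently, a resolvent bound $\Im G_{jj} = O(1)$ valid at scale $n^{-1+\eps}$ up to the edge, which is the paper's route). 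Note also that the two inputs you quote do not support the edge step as stated in this survey: \eqref{eigenrigid} is asserted only in the bulk range $\delta n \leq i \leq (1-\delta)n$ (the footnote says the error must be enlarged at the edge), and the error term $o(n|I|)$ in Theorem \ref{lsc} gives no nontrivial lower bound on $N_I$ when $n\int_I \rho_{sc} = o(n|I|)$, as happens near the spectral edge. So either restrict your argument to bulk indices and invoke a separate edge argument (resolvent-based, or the edge analysis of \cite{TVlocal2}), or switch to the Green's function approach throughout.
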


Note from Pythagoras' theorem that $\sum_{j=1}^n |u_i(M_n)^* e_j|^2 = \|u_i(M_n)\|^2 = 1$; thus Corollary \ref{deloc} asserts, roughly speaking, that the coefficients of each eigenvector are as spread out (or \emph{delocalised}) as possible.

Corollary \ref{deloc} can be established in a number of ways.  One particularly slick approach proceeds via control of the resolvent (or Green's function) $(W_n - zI)^{-1}$, taking advantage of the identity
$$ \Im ((W_n-zI)^{-1})_{jj} = \sum_{i=1}^n \frac{\eta}{(\lambda_i(W_n)-E)^2 + \eta^2} |u_i(W_n)^* e_j|^2$$
for $z = E+\sqrt{-1}\eta$; it turns out that the machinery used to prove Theorem \ref{lsc} also can be used to control the resolvent.  See for instance \cite{Erd} for details of this approach.  

\section{GUE and gauss divisible ensembles}\label{gue-sec}

We now turn to the question of the fine-scale behavior of eigenvalues of Wigner matrices, starting with the model case of GUE.  Here, it is convenient to work with the fine-scale normalisation $A_n := \sqrt{n} M_n$.  For simplicity we will restrict attention to the bulk region of the spectrum, which in the fine-scale normalisation corresponds to eigenvalues $\lambda_i(A_n)$ of $A_n$ that are near $nu$ for some fixed $-2<u<2$ independent of $n$.

A basic object of study are the \emph{$k$-point correlation functions} $R_n^{(k)} = R_n^{(k)}(A_n): \R^k \to \R^+$, defined via duality to be the unique symmetric function (or measure) for which one has
\begin{equation}\label{rfk}
 \int_{\R^k} F(x_1,\ldots,x_k) R_n^{(k)}(x_1,\ldots,x_k)\ dx_1 \ldots dx_k = k! \sum_{1 \leq i_1 < \ldots < i_k} \E F(\lambda_{i_1}(A_n), \ldots, \lambda_{i_k}(A_n) )
\end{equation}
for all symmetric continuous compactly supported functions $F: \R^k \to \R$.  Alternatively, one can write
$$ R_n^{(k)}(x_1,\ldots,x_k) = \frac{n!}{(n-k)!} \int_{\R^{n-k}} \rho_n(x_1,\ldots,x_n)\ dx_{k+1} \ldots dx_n$$
where $\rho_n := \frac{1}{n!} R_n^{(n)}$ is the symmetrized joint probability distribution of all $n$ eigenvalues of $A_n$.

From the semi-circular law, we expect that at the energy level $nu$ for some $-2 < u < 2$, the eigenvalues of $A_n$ will be spaced with average spacing $1/\rho_\sc(u)$.  It is thus natural to consider the \emph{normalised $k$-point correlation function} $\rho^{(k)}_{n,u} = \rho^{(k)}_{n,u}(A_n): \R^k \to \R^+$, defined by the formula
\begin{equation}\label{roo}
 \rho^{(k)}_{n,u}(x_1,\ldots,x_k) := R_n^{(k)}\left( nu + \frac{x_1}{\rho_\sc(u)}, \ldots, nu + \frac{x_k}{\rho_\sc(u)} \right).
\end{equation}

It has been generally believed (and in many cases explicitly conjectured; see e.g. \cite[page 9]{Meh}) that the asymptotic statistics for the quantities mentioned above are \emph{universal}, in the sense that the limiting laws do not depend on the distribution of the atom variables (assuming of course that they have been normalised as stated in Definition \ref{def:Wignermatrix}).  This phenomenon was motivated by examples of similarly universal laws in physics, such as the laws of thermodynamics or of critical percolation; see e.g. \cite{Meh, Deibook, Deisur} for further discussion.

It is clear that if one is able to prove the universality of a limiting law, then it suffices to 
compute this law for one specific model in order to describe the asymptotic behaviour for all other models. A natural choice for the specific model is GUE, as for this model, many 
limiting laws can be computed directly thanks to the availability of an explicit formula for the joint distribution of the eigenvalues, as well as the useful identities of determinantal processes.  For instance, one has \emph{Ginibre's formula}
\begin{equation}\label{rhun}
\rho_n(x_1,\ldots,x_n) = \frac{1}{(2\pi n)^{n/2}} e^{-|x|^2/2n} \prod_{1 \leq i<j \leq n} |x_i-x_j|^2,
\end{equation}
for the joint eigenvalue distribution, as can be verified from a standard calculation; see \cite{gin}.  From this formula, the theory of determinantal processes, and asymptotics for Hermite polynomials, one can then obtain the limiting law
\begin{equation}\label{k-asym}
 \lim_{n \to \infty} \rho^{(k)}_{n,u}(x_1,\ldots,x_k) = \rho^{(k)}_{\Dyson}(x_1,\ldots,x_k)
\end{equation}
locally uniformly in $x_1,\ldots,x_k$ where
$$ \rho^{(k)}_{\Dyson}(x_1,\ldots,x_k) := \det( K_{\Dyson}(x_i,x_j) )_{1 \leq i,j \leq k}$$
and $K_{\Dyson}$ is the \emph{Dyson sine kernel}
$$ K_{\Dyson}(x,y) := \frac{\sin(\pi(x-y))}{\pi(x-y)}$$
(with the usual convention that $\frac{\sin x}{x}$ equals $1$ at the origin); see \cite{gin,Meh}.  

Using a general central limit theorem for determinantal processes due to Costin-Leibowitz \cite{CLe} and Soshnikov \cite{Sos2}, one can then give a limiting law for $N_I(A_n)$ in the case of the macroscopic intervals $I=[nu,+\infty)$.  More precisely, one has the central limit theorem
$$ \frac{N_{[nu,+\infty)}(A_n) - n \int_u^\infty \rho_\sc(y)\ dy}{\sqrt{\frac{1}{2\pi^2} \log n}} \to N(0,1)_\R$$
in the sense of probability distributions, for any $-2 < u < 2$; see \cite{Gus}.  By using the counting functions $N_{[nu,+\infty)}$ to solve for the location of individual eigenvalues $\lambda_i(A_n)$, one can then conclude the central limit theorem
\begin{equation}\label{gustav}
 \frac{\lambda_i(A_n) - \lambda_i^\cl(A_n)}{\sqrt{\log n/2\pi} / \rho_\sc(u)} \to N(0,1)_\R
 \end{equation}
whenever $\lambda_i^\cl(A_n) := n \lambda_i^\cl(W_n)$ is equal to $n(u+o(1))$ for some fixed $-2 < u < 2$; see \cite{Gus}.  

The above analysis extends to many other classes of invariant ensembles (such as GOE), for which the joint eigenvalue distribution has a form similar to \eqref{rhun}; see \cite{Deibook} for further discussion.  Another important extension of the above results is to the \emph{gauss divisible} ensembles, which are Wigner matrices $M_t$ of the form
$$ M_n^t = e^{-t/2} M_n^0 + (1-e^{-t})^{1/2} G_n,$$
where $G_n$ is a GUE matrix independent of $M_n^0$.  In particular, the random matrix $M_n^t$ is distributed as $M_n^0$ for $t=0$ and then continuously deforms towards the GUE distribution as $t \to +\infty$.  By using explicit formulae for the eigenvalue distribution of a gauss divisible matrix,  Johansson \cite{Joh1} was able\footnote{Some additional technical hypotheses were assumed in \cite{Joh1}, namely that the diagonal variance $\sigma^2$ was equal to $1$, that the real and imaginary parts of each entry of $M'_n$ were independent, and that the matrix entries had bounded $C_0^{\operatorname{th}}$ moment for some $C_0>6$.} to extend the asymptotic \eqref{k-asym} for the $k$-point correlation function from GUE to the more general class of gauss divisible matrices with fixed parameter $t>0$ (independent of $n$).

It is of interest to extend this analysis to as small a value of $t$ as possible, since if one could set $t=0$ then one would obtain universality for all Wigner ensembles.  By optimising Johansson's method (and taking advantage of the local semi-circle law), Erd\H os, Peche, Ramirez, Schlein, and Yau \cite{EPRSY} was able to extend the universality of \eqref{k-asym} (interpreted in a suitably weak convergence topology, such as vague convergence) to gauss divisible ensembles for $t$ as small as $n^{-1+\eps}$ for any fixed $\eps>0$.

An important alternate approach to these results was developed by Erd\H os, Ramirez, Schlein, Yau, and Yin \cite{ERSY}, \cite{ESY}, \cite{ESYY}, based on a stability analysis of the Dyson Brownian motion \cite{Dys} governing the evolution of the eigenvalues of a matrix Ornstein-Uhlenbeck process.  We refer to \cite{Erd} for a discussion of this method.  Among other things, this argument reproves a weaker version of the result in \cite{EPRSY} mentioned earlier, in which one obtained universality for the asymptotic \eqref{k-asym} after an additional averaging in the energy parameter $u$.  However, the method was simpler and more flexible than that in \cite{EPRSY}, as it did not rely on explicit identities, and has since been extended to many other types of ensembles, including the real symmetric analogue of gauss divisible ensembles in which the role of GUE is replaced instead by GOE.

\section{The Four Moment Theorem}\label{swap-sec}

The results discussed above for invariant or gauss divisible ensembles can be extended to more general Wigner ensembles via a powerful swapping method known as the \emph{Lindeberg exchange strategy}, introduced in Lindeberg's classic proof \cite{lindeberg} of the central limit theorem, and first applied to Wigner ensembles in \cite{Chat}.  This method can be used to control expressions such as $\E F(M_n) - F(M'_n)$, where $M_n, M'_n$ are two (independent) Wigner matrices.  If one can obtain bounds such as
$$\E F(M_n) - \E F(\tilde M_n) = o(1/n)$$
when $\tilde M_n$ is formed from $M_n$ by replacing\footnote{Technically, the matrices $\tilde M_n$ formed by such a swapping procedure are not Wigner matrices as defined in Definition \ref{def:Wignermatrix}, because the diagonal or upper-triangular entries are no longer identically distributed.  However, all of the relevant estimates for Wigner matrices can be extended to the non-identically-distributed case at the cost of making the notation slightly more complicated.  As this is a relatively minor issue, we will not discuss it further here.} one of the diagonal entries $\xi_{ii}$ of $M_n$ by the corresponding entry $\xi'_{ii}$ of $M'_n$, and bounds such as
$$\E F(M_n) - \E F(\tilde M_n) = o(1/n^2)$$
when $\tilde M_n$ is formed from $M_n$ by replacing one of the off-diagonal entries $\xi_{ij}$ of $M_n$ with the corresponding entry $\xi'_{ij}$ of $M'_n$ (and also replacing $\xi_{ji} = \overline{\xi_{ij}}$ with $\xi'_{ji} = \overline{\xi'_{ij}}$, to preserve the Hermitian property), then on summing an appropriate telescoping series, one would be able to conclude asymptotic agreement of the statistics $\E F(M_n)$ and $\E F(M'_n)$:
\begin{equation}\label{famn}
\E F(M_n) - \E F(M'_n) = o(1)
\end{equation}

The \emph{Four Moment Theorem} asserts, roughly speaking, that we can obtain conclusions of the form \eqref{famn} for suitable statistics $F$ as long as $M_n, M'_n$ match to fourth order.  More precisely, we have

\begin{definition}[Matching moments]  Let $k \geq 1$.  Two complex random variables $\xi, \xi'$ are said to \emph{match to order $k$} if one has $\E \Re(\xi)^a \Im(\xi)^b = \E \Re(\xi')^a \Im(\xi')^b$ whenever $a, b \geq 0$ are integers such that $a+b \leq k$.
\end{definition}

In the model case when the real and imaginary parts of $\xi$ or of $\xi'$ are independent, the matching moment condition simplifies to the assertion that $\E \Re(\xi)^a = \E \Re(\xi')^a$ and $\E \Im(\xi)^b = \E \Im(\xi')^b$ for all $0 \leq a, b \leq k$.
 
\begin{theorem}[Four Moment Theorem]\label{theorem:Four} 
Let $c_0 > 0$ be a sufficiently small constant.
 Let $M_n = (\xi_{ij})_{1 \leq i,j \leq n}$ and $M'_n = (\xi'_{ij})_{1 \leq i,j \leq n}$ be
 two Wigner matrices obeying Condition \condo. Assume furthermore that for any $1 \le  i<j \le n$, $\xi_{ij}$ and
 $\xi'_{ij}$  match to order $4$ 
  and for any $1 \le i \le n$, $\xi_{ii}$ and $\xi'_{ii}$ match  to order $2$.  Set $A_n := \sqrt{n} M_n$ and $A'_n := \sqrt{n} M'_n$, let $1 \leq k \leq n^{c_0}$ be an integer,  and let $G: \R^k \to \R$ be a smooth function obeying the derivative bounds
\begin{equation}\label{G-deriv}
|\nabla^j G(x)| \leq n^{c_0}
\end{equation}
for all $0 \leq j \leq 5$ and $x \in \R^k$.
 Then for any $1 \le i_1 < i_2 \dots < i_k \le n$, and for $n$ sufficiently large we have
\begin{equation} \label{eqn:approximation}
 |\E ( G(\lambda_{i_1}(A_n), \dots, \lambda_{i_k}(A_n))) -
 \E ( G(\lambda_{i_1}(A'_n), \dots, \lambda_{i_k}(A'_n)))| \le n^{-c_0}.
\end{equation}
\end{theorem}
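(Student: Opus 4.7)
The plan is to use the Lindeberg exchange strategy, swapping entries of $M_n$ into $M'_n$ one at a time. I would first enumerate the $\binom{n+1}{2}$ positions on and above the diagonal, and build a telescoping chain $M_n = M^{(0)}, M^{(1)}, \ldots, M^{(N)} = M'_n$ in which $M^{(s)}$ differs from $M^{(s-1)}$ at exactly one upper-triangular entry (together with its Hermitian conjugate below the diagonal). Writing $G_s := G(\lambda_{i_1}(\sqrt{n} M^{(s)}), \ldots, \lambda_{i_k}(\sqrt{n} M^{(s)}))$, the goal reduces to proving a per-swap bound $|\E G_s - \E G_{s-1}| = O(n^{-2-c_1})$ at each off-diagonal position and $O(n^{-1-c_1})$ at each diagonal position, for some $c_1 > c_0$. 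Summing the $O(n^2)$ off-diagonal and $O(n)$ diagonal contributions then yields \eqref{eqn:approximation}.

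For an individual swap at position $(i,j)$ with $i \le j$, I would condition on all other entries of both $M_n$ and $M'_n$, let $\tilde M$ be the common matrix obtained by zeroing the $(i,j)$ and $(j,i)$ entries, and set $M(z) := \tilde M + z E^{(ij)} + \bar z E^{(ji)}$ for $i<j$ (or $M(z) := \tilde M + z E^{(ii)}$ for $i=j$, with $z$ real). Define
\[
f(z) := G\bigl(\lambda_{i_1}(\sqrt{n} M(z)), \ldots, \lambda_{i_k}(\sqrt{n} M(z))\bigr),
\]
and Taylor expand $f$ in $\Re z, \Im z$ to order $k_0 = 4$ off-diagonal (or $k_0 = 2$ on-diagonal). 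The matching moment hypothesis asserts $\E \Re(\xi_{ij})^a \Im(\xi_{ij})^b = \E \Re(\xi'_{ij})^a \Im(\xi'_{ij})^b$ for $a+b \le k_0$, so every Taylor coefficient of order $\le k_0$ contributes identically in expectation against $\xi_{ij}$ and against $\xi'_{ij}$. Only the Taylor remainder of order $k_0+1$ survives, and the task becomes to show
\[
\bigl(\E|\xi_{ij}|^{k_0+1} + \E|\xi'_{ij}|^{k_0+1}\bigr) \cdot \sup_{|z| \le T} |\nabla^{k_0+1} f(z)| = O(n^{-2-c_1})
\]
off-diagonal, and the analogous $O(n^{-1-c_1})$ on-diagonal, with a cutoff $T = n^{o(1)}$ sufficing by Condition \condo.

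To control $\nabla^{k_0+1} f$, I would combine the chain rule with iterated Hadamard variation formulas: the $p$-th derivative $\tfrac{d^p}{dz^p} \lambda_{i_\ell}(\sqrt{n} M(z))$ expands as a sum over index-chains of products of $O(p)$ eigenvector coordinates $u_m^* e_i, u_m^* e_j$ (each of size $O(n^{-1/2 + o(1)})$ by Corollary \ref{deloc}), divided by a product of $O(p)$ spectral gaps $\lambda_{i_\ell}(\sqrt{n} M(z)) - \lambda_m(\sqrt{n} M(z))$. The local semi-circle law (Theorem \ref{lsc}) and the rigidity estimate \eqref{eigenrigid} give enough control on these gaps, in aggregate, that outside a failure event of probability $O_A(n^{-A})$ the sum over $m \ne i_\ell$ of inverse-gap products is only $n^{O(c_0)}$. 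Combined with the hypothesis $|\nabla^j G| \le n^{c_0}$ for $j \le 5$, this produces the per-swap bound once $c_0$ is taken sufficiently small.

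The hard part will be the \emph{level repulsion with stability}: one needs lower bounds on the gaps $|\lambda_{i_\ell}(\sqrt{n} M(z)) - \lambda_m(\sqrt{n} M(z))|$ (or at least a suitable aggregate bound on the inverse-gap sums appearing in the derivative formulas) that remain valid uniformly in $z$ across the Taylor window $|z| \le T$, not merely at $z = \xi_{ij}$. One natural route is to exploit the resolvent identity $\Im((W_n - zI)^{-1})_{jj} = \sum_i \tfrac{\eta}{(\lambda_i-E)^2 + \eta^2} |u_i^* e_j|^2$ to upgrade Theorem \ref{lsc} into a quantitative repulsion statement, and then argue by a perturbation/continuity estimate that the good event persists under shifting a single matrix entry by $|z| \le T$. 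Once the gap control and delocalisation are combined, the Taylor remainder bound closes, the telescoping sum converges, and \eqref{eqn:approximation} follows.
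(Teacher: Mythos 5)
Your overall architecture — Lindeberg swapping entry by entry, conditioning on the other entries, Taylor expanding to fourth order off the diagonal (second order on it) so that the matched moments cancel, and controlling the derivatives via iterated Hadamard variation formulae together with eigenvector delocalisation (Corollary \ref{deloc}) — is exactly the paper's strategy, and your bookkeeping of the per-swap errors ($O(n^{-2-c_1})$ off-diagonal, $O(n^{-1-c_1})$ on-diagonal, $O(n^2)$ and $O(n)$ swaps respectively) is the right shape. The genuine gap is in your treatment of the small denominators. You assert that Theorem \ref{lsc} and the rigidity estimate \eqref{eigenrigid} give, \emph{outside an event of probability $O_A(n^{-A})$}, a bound $n^{O(c_0)}$ on the inverse-gap sums appearing in the derivative formulae. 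No such overwhelming-probability statement is true: the local law and rigidity control the counting function only at (fine-scale) resolution $n^{\eps}$, i.e.\ they bound the number of eigenvalues in windows much larger than the mean spacing, and they give no lower bound whatsoever on an individual gap $|\lambda_{i+1}(A_n)-\lambda_i(A_n)|$. The event that a bulk gap is smaller than $n^{-c_0}$ has probability that is only \emph{polynomially} small (for GUE it decays like a fixed power of the gap, by level repulsion), and this is precisely the content of the Gap Theorem (Theorem \ref{gap}), which is an independent, nontrivial ingredient proved by different means in \cite{TVlocal1,TVlocal3}; it is not a consequence of the local semicircle law, and your proposed resolvent upgrade cannot produce it either, since $\Im((W_n-z)^{-1})_{jj}$ only yields \emph{upper} bounds on the local eigenvalue density, never repulsion.

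This is not a cosmetic issue, because your per-swap target is $O(n^{-5/2+O(c_0)})$ while $\|G\|_\infty \leq n^{c_0}$: on the polynomially-likely bad event where some gap is tiny, the Taylor remainder is uncontrolled and the trivial bound contributes an error of order $n^{c_0-c_1}$ per swap, which overwhelms $n^{-5/2}$ and, summed over $O(n^2)$ swaps, destroys the estimate. The paper's resolution is structural rather than probabilistic: it first proves a \emph{truncated} Four Moment Theorem (Theorem \ref{trunc}) for statistics $G(\lambda_{i_1},\ldots,\lambda_{i_k},Q_{i_1},\ldots,Q_{i_k})$ that are supported in the region \eqref{qii} where the regularised inverse-gap sums $Q_{i_l}$ are at most $n^{c_0}$, so that the entire swapping and Taylor-expansion analysis takes place in the good region (with a continuity argument, which you did correctly flag, to propagate the bound $Q_i(A(t)) \ll n^{O(c_0)}$ over the whole Taylor window in $t$). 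Only after the telescoping is complete does one remove the truncation, a single application of Theorem \ref{gap} that costs a polynomially small error $n^{-c_1}$ — which is exactly why the conclusion \eqref{eqn:approximation} is only $n^{-c_0}$ rather than anything close to the per-swap accuracy. To repair your proposal you would need to import both of these ingredients: the Gap Theorem as a black box (or an equivalent level-repulsion input), and the device of building the $Q$-truncation into the statistic itself rather than trying to discard the bad event swap by swap.
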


A preliminary version of Theorem \ref{theorem:Four} was first established by the authors in \cite{TVlocal1}, in the case\footnote{In the paper, $k$ was held fixed, but an inspection of the argument reveals that it extends without difficulty to the case when $k$ is as large as $n^{c_0}$, for $c_0$ small enough.} of bulk eigenvalues (thus $\delta n \leq i_1,\ldots,i_k \leq (1-\delta) n$ for some absolute constant $\delta > 0$).  In \cite{TVlocal2}, the restriction to the bulk was removed; and in \cite{TVlocal3}, Condition {\condo} was relaxed to a finite moment condition.  We will discuss the proof of this theorem in Section \ref{sketch}.  There is strong evidence that the condition of four matching moments is necessary to obtain the conclusion \eqref{eqn:approximation}; see \cite{TVnec}.

A key technical result used in the proof of the Four Moment Theorem, which is also of independent interest, is the \emph{gap theorem}:

\begin{theorem}[Gap theorem]\label{gap}  Let $M_n$ be a Wigner matrix obeying Condition \condo.  Then for every $c_0>0$ there exists a $c_1>0$ (depending only on $c_0$) such that
$$ \P( |\lambda_{i+1}(A_n) - \lambda_i(A_n)| \leq n^{-c_0} ) \ll n^{-c_1}$$
for all $1 \leq i < n$.
\end{theorem}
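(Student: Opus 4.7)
The plan is to argue that a small eigenvalue gap forces a rigid geometric coincidence between one coordinate of the corresponding eigenvector of $M_n$ and the projection of the independent last column of $M_n$ onto an eigenvector of a principal minor. Let $M'_n$ denote the $(n-1)\times(n-1)$ principal minor obtained by deleting the last row and column of $M_n$, with eigenvalues $\tilde\lambda_1\le\cdots\le\tilde\lambda_{n-1}$ and orthonormal eigenbasis $v_1,\dots,v_{n-1}$, and write $X:=(\xi_{1n},\dots,\xi_{(n-1)n})^{\top}\in\C^{n-1}$ and $a:=e_n^*u_i(M_n)$; crucially, $M'_n$ (hence each $v_j$) is independent of $X$. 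Projecting the eigenvalue equation $M_n u_i(M_n)=\lambda_i(M_n)u_i(M_n)$ onto the first $n-1$ coordinates and using $\|u_i(M_n)\|=1$ yields the classical identity
\begin{equation*}
 \frac{1}{|a|^2} - 1 \;=\; \sum_{j=1}^{n-1} \frac{|X^* v_j|^2}{(\tilde\lambda_j - \lambda_i(M_n))^2},
\end{equation*}
so in particular $|a|^2 |X^* v_i|^2 \le (\tilde\lambda_i - \lambda_i(M_n))^2$. Cauchy interlacing gives $\tilde\lambda_i \in [\lambda_i(M_n),\lambda_{i+1}(M_n)]$; converting back to the $A_n=\sqrt n\, M_n$ scale, the gap event $E:=\{|\lambda_{i+1}(A_n)-\lambda_i(A_n)|\le n^{-c_0}\}$ forces the deterministic consequence
\begin{equation*}
 |a|^2 \cdot |X^* v_i|^2 \;\le\; n^{-2c_0-1}.
\end{equation*}

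Since the typical size of $|a|^2$ is $n^{-1}$ and of $|X^*v_i|^2$ is $1$, the typical product is $n^{-1}$, so the above inequality asks for $2c_0$ extra orders of smallness. The plan is to split at an intermediate threshold by choosing exponents $\alpha+\beta=2c_0$ and writing
\begin{equation*}
 \P(E) \;\le\; \P(|a|^2\le n^{-1-\alpha}) \;+\; \P\bigl(|X^*v_i|^2\le n^{-\beta}\bigr),
\end{equation*}
taking for definiteness $\alpha=\beta=c_0$. For the second term I would condition on $M'_n$: then $v_i$ is deterministic and $X^*v_i=\sum_{k<n}\overline{\xi_{kn}}v_{i,k}$ is a linear combination of independent random variables with $\sum_k|v_{i,k}|^2=1$. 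Applying Corollary \ref{deloc} to $M'_n$ (which is itself a Wigner matrix satisfying Condition \condo) guarantees with overwhelming probability that no single weight $v_{i,k}$ exceeds $n^{-1/2+\eps}$ in magnitude, so a standard Esseen-type anti-concentration estimate applied to whichever of $\Re(X^*v_i)$, $\Im(X^*v_i)$ carries the larger variance yields $\P(|X^*v_i|\le s\mid M'_n)\lesssim s + n^{-1/2+o(1)}$; inserting $s=n^{-c_0/2}$ bounds this term by $O(n^{-c_0/2})$.

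The main obstacle is the lower-tail estimate $\P(|a|^2\le n^{-1-c_0})\ll n^{-c_1}$, since Corollary \ref{deloc} gives only an upper bound on eigenvector coordinates. To handle this I would revisit the identity
\begin{equation*}
 \frac{1}{|a|^2} = 1 + \sum_{j=1}^{n-1} \frac{|X^* v_j|^2}{(\tilde\lambda_j - \lambda_i(M_n))^2},
\end{equation*}
recognising $|a|^{-2}-1$ as (up to scaling) $\eta^{-1}\,\Im m(z)$ in a limit $\eta\to 0$, where $m(z)$ is the $(n,n)$-entry of the resolvent of $M_n$ at $z=\lambda_i(M_n)+i\eta$. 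Eigenvalue rigidity \eqref{eigenrigid} localises $\lambda_i(M_n)$ near its classical location up to an error $n^{-1+\eps}$, and the local semi-circle law (Theorem \ref{lsc}) controls the distribution of the $\tilde\lambda_j$ at comparable scales; together with a separate small-ball input for the numerators $|X^*v_j|^2$ over the $O(n^{\eps})$ nearby eigenvalues, this gives an upper bound on $|a|^{-2}$, equivalently a polynomial lower tail for $|a|^2$ at scales just below $1/n$. The delicate point is that the $j=i$ term can genuinely be anomalously small (this is exactly the event we are trying to control), so a dyadic union bound over $|\tilde\lambda_i-\lambda_i(M_n)|\in[2^{-k-1},2^{-k}]$ is needed, combined with the exchangeability of the $n$ possible choices of last row/column (which lets us symmetrise and apply Markov's inequality to $\sum_k|(u_i(M_n))_k|^2=1$). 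Summing the two contributions produces the claimed bound with some $c_1=c_1(c_0)>0$.
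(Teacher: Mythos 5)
Your reduction is set up correctly: the minor identity $\frac{1}{|a|^2}-1=\sum_{j}|X^*v_j|^2/(\tilde\lambda_j-\lambda_i(M_n))^2$, Cauchy interlacing, the deterministic consequence $|a|^2|X^*v_i|^2\le n^{-2c_0-1}$ on the gap event, and your treatment of $\P(|X^*v_i|^2\le n^{-c_0})$ (condition on the minor, use delocalisation of $v_i$ and a Berry--Esseen small-ball bound) are all sound. Bear in mind that the survey gives no proof of Theorem \ref{gap}; it defers to \cite{TVlocal1}, \cite{TVlocal3}, whose argument uses the same ingredients but assembles them very differently, precisely to avoid the step where your sketch breaks down. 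That step is the lower-tail bound $\P(|a|^2\le n^{-1-c_0})\ll n^{-c_1}$ for a \emph{single} eigenvector coordinate, and your plan for it fails. Upper-bounding $1/|a|^2$ through the same identity requires a lower bound on the distance from $\lambda_i(M_n)$ to the spectrum of the minor, which by interlacing is a level-repulsion statement of exactly the same nature and strength as the theorem being proved, so the route is circular. Rigidity \eqref{eigenrigid} and Theorem \ref{lsc} only localise eigenvalues to precision $n^{-1+\eps}$ and cannot prevent $\tilde\lambda_i$ from lying within, say, $n^{-10}$ of $\lambda_i(M_n)$; the dyadic decomposition in $|\tilde\lambda_i-\lambda_i(M_n)|$ supplies no probability bound for the small dyadic scales, because that probability is the unknown; and exchangeability plus Markov applied to $\sum_k|(u_i)_k|^2=1$ gives only $\E|a|^2=1/n$, hence upper-tail control of $|a|^2$, never a lower tail. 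In fact the event $|a|^2\le n^{-1-c_0}$ is not overwhelmingly rare at all: already for GUE its probability is of order $n^{-c_0}$ (of order $n^{-c_0/2}$ for GOE), so no tool that holds with overwhelming probability (delocalisation, local law, rigidity) can dispose of it, and for general Wigner matrices such a single-coordinate lower bound is essentially as hard as the gap theorem itself.

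The missing idea is to avoid saying anything about one fixed coordinate. The cited proof in \cite{TVlocal1} does this by a multi-scale induction on successive minors, using a regularised gap, delocalisation, interlacing and distance-to-subspace anti-concentration, so that at no stage is a lower bound on an individual coordinate required. A shorter repair along the lines you gesture at with exchangeability: by delocalisation of $u_i$, at least $\frac{1}{2}n^{1-2\eps}$ indices $k$ satisfy $|(u_i)_k|^2\ge \frac{1}{2n}$; for each such $k$, the identity written for the $k$-th row/column together with interlacing forces, on the gap event, the coincidence $|X_{(k)}^*v_i^{(k)}|\le \sqrt{2}\,n^{-c_0}$, where $X_{(k)}$ is the $k$-th column (off-diagonal part) and $v_i^{(k)}$ the $i$-th eigenvector of the $k$-th minor. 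Each such coincidence has probability $O(n^{-c_0}+n^{-1/2+o(1)})$ by your own Berry--Esseen step (conditioning on the $k$-th minor, which is independent of $X_{(k)}$), so Markov's inequality applied to the \emph{count} of coincidences over the $n$ columns bounds the probability of the gap event by $O\bigl(n^{2\eps}(n^{-c_0}+n^{-1/2+o(1)})\bigr)$, which yields the theorem with some $c_1(c_0)>0$. The essential point is that Markov is applied to the number of small-ball coincidences across columns, not to the size of any single coordinate.
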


For reasons of space we will not discuss the proof of this theorem here, but refer the reader to \cite{TVlocal1}, \cite{TVlocal3}.  Among other things, the gap theorem tells us that eigenvalues of a Wigner matrix are usually simple.  Closely related \emph{level repulsion} estimates were established (under an additional smoothness hypothesis on the atom distributions) in \cite{ESY3}.

Another variant of the Four Moment Theorem was subsequently introduced in \cite{EYY}, in which the eigenvalues $\lambda_{i_j}(A_n)$ appearing in Theorem \ref{theorem:Four} were replaced by the components of the resolvent (or Green's function) $(W_n-z)^{-1}$, but with slightly different technical hypotheses on the matrices $M_n, M'_n$; see \cite{EYY} for full details.  As the resolvent-based quantities are averaged statistics that sum over many eigenvalues, they are far less sensitive to the eigenvalue repulsion phenomenon than the individual eigenvalues, and as such the version of the Four Moment Theorem for Green's function has a somewhat simpler proof (based on resolvent expansions rather than the Hadamard variation formulae and Taylor expansion).  Conversely, though, to use the Four Moment Theorem for Green's function to control individual eigenvalues, while possible, requires a significant amount of additional argument; see \cite{knowles}.  Finally, we remark that the Four Moment Theorem has also been extended to cover eigenvectors as well as eigenvalues; see \cite{TV-vector}, \cite{knowles} for details.

\section{Sketch of proof of four moment theorem}\label{sketch}

In this section we discuss the proof of Theorem \ref{theorem:Four}, following the arguments that originated in \cite{TVlocal1} and refined in \cite{TVlocal3}.  

In addition to Theorem \ref{gap}, a key ingredient is the following truncated version of the Four Moment Theorem, in which one removes the event that two consecutive eigenvalues are too close to each other.  For technical reasons, we need to introduce quantities
$$ Q_i(A_n) := \sum_{j \neq i} \frac{1}{|\lambda_j(A_n) - \lambda_i(A_n)|^2} $$
for $i=1,\ldots,n$, which is a regularised measure of extent to which $\lambda_i(A_n)$ is close to any other eigenvalue of $A_n$.

\begin{theorem}[Truncated Four Moment Theorem]\label{trunc}
Let $c_0 > 0$ be a sufficiently small constant.
 Let $M_n = (\xi_{ij})_{1 \leq i,j \leq n}$ and $M'_n = (\xi'_{ij})_{1 \leq i,j \leq n}$ be
 two Wigner matrices obeying Condition \condo. Assume furthermore that for any $1 \le  i<j \le n$, $\xi_{ij}$ and
 $\xi'_{ij}$  match to order $4$ 
  and for any $1 \le i \le n$, $\xi_{ii}$ and $\xi'_{ii}$ match  to order $2$.  Set $A_n := \sqrt{n} M_n$ and $A'_n := \sqrt{n} M'_n$, let $1 \leq k \leq n^{c_0}$ be an integer, and let 
$$ G = G(\lambda_{i_1},\ldots,\lambda_{i_k},Q_{i_1},\ldots,Q_{i_k})$$
be a smooth function from $\R^k \times \R^k_+$ to $\R$ that is supported in the region
\begin{equation}\label{qii}
 Q_{i_1},\ldots,Q_{i_k} \leq n^{c_0}
\end{equation}
and obeys the derivative bounds
\begin{equation}\label{geo}
 |\nabla^j G(\lambda_{i_1},\ldots,\lambda_{i_k},Q_{i_1},\ldots,Q_{i_k})| \leq n^{c_0}
\end{equation}
for all $0 \leq j \leq 5$.  Then
\begin{equation}\label{beg}
\begin{split}
& \E G( \lambda_{i_1}(A_n),\ldots,\lambda_{i_k}(A_n),Q_{i_1}(A_n),\ldots,Q_{i_k}(A_n)) =\\
&\quad 
  \E G( \lambda_{i_1}(A'_n),\ldots,\lambda_{i_k}(A'_n),Q_{i_1}(A'_n),\ldots,Q_{i_k}(A'_n)) + O(n^{-1/2+O(c_0)}.
\end{split}
 \end{equation}
\end{theorem}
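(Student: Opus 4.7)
The plan is to interpolate between $M_n$ and $M'_n$ one entry at a time, following the Lindeberg exchange strategy. Enumerate the upper-triangular positions $(p,q)$ with $1 \le p \le q \le n$, producing a telescoping sequence $M_n = M^{(0)}, M^{(1)}, \ldots, M^{(N)} = M'_n$, where at step $m$ the $(p,q)$-entry of $M^{(m-1)}$ (and its conjugate) is replaced by the corresponding entry of $M'_n$. It then suffices to show that each off-diagonal swap changes $\E G$ by at most $O(n^{-5/2 + O(c_0)})$ and each diagonal swap by at most $O(n^{-3/2 + O(c_0)})$; summing the $\approx n^2/2$ off-diagonal and $n$ diagonal contributions yields \eqref{beg}.

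For a single off-diagonal swap at position $(p,q)$, I would condition on all other entries of $M_n$ and view $F(\xi) := G(\lambda_{i_1}(A(\xi)),\ldots,Q_{i_k}(A(\xi)))$ as a function of the single complex variable $\xi = \xi_{pq}$, where $A(\xi)$ denotes $\sqrt{n}$ times the Hermitian matrix obtained by placing $\xi$ at $(p,q)$ and $\bar\xi$ at $(q,p)$ while keeping the other entries fixed. Taylor-expanding $F$ in the real coordinates $(\Re\xi, \Im\xi)$ up to fourth order and taking expectations, the matching-moment hypothesis kills all terms of orders $0$ through $4$, and the swap error reduces to the expectation of the fifth-order Taylor remainder. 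The diagonal swaps are handled identically, except the expansion stops at order two (since only two moments match), yielding a third-order remainder.

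The core technical step is to prove the derivative estimate
$$
|\partial_\xi^j F(\xi)| \le n^{-j/2 + O(c_0)} \qquad (0 \le j \le 5)
$$
on the event $\{Q_{i_1}(A(\xi)),\ldots,Q_{i_k}(A(\xi)) \le n^{c_0}\} \cap \{|\xi| \le n^{c_0}\}$ intersected with the delocalisation event from Corollary~\ref{deloc}. The first derivative $\partial_\xi \lambda_i$ equals $\sqrt{n}\, u_i(p)\overline{u_i(q)}$ by the Hadamard first variation formula and hence has size $n^{-1/2 + O(c_0)}$ by delocalisation. Higher derivatives produce expressions involving sums of the form $\sum_{j \ne i} u_j(p)\overline{u_j(q)}/(\lambda_i - \lambda_j)^r$ and analogous derivatives of $Q_{i_j}$, which by Cauchy--Schwarz are dominated by products of $Q_i^{1/2}$ and eigenvector coordinates; the truncation \eqref{qii} controls the $Q_i^{1/2}$ factors by $n^{O(c_0)}$. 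Combining with the chain rule and \eqref{geo} gives the claimed bound, while the tail contribution from $|\xi| > n^{c_0}$ is negligible by Condition~\condo. A short Lipschitz stability argument, using the fact that $\lambda_i$ and $Q_i$ change continuously in $\xi$, shows that the truncation event at $\xi = 0$ implies a slightly relaxed version along the entire Taylor interpolation segment between $\xi_{pq}$ and $\xi'_{pq}$.

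The main obstacle is the derivative estimate itself. Each application of $\partial_\xi$ to $\lambda_i$ or $Q_i$ produces a more intricate multilinear combination of eigenvector components and reciprocal eigenvalue gaps, and the key difficulty is to pair these systematically via Cauchy--Schwarz so that every spurious $1/|\lambda_i - \lambda_j|$ is absorbed into a $Q_i^{1/2}$ factor controlled by the truncation. Once this bookkeeping is in place, the cancellation from four matching moments, the tail estimate for large $\xi$, and the telescoping sum over the $n(n+1)/2$ entries combine to give \eqref{beg}.
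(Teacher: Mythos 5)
Your proposal follows essentially the same route as the paper's own sketch (and the detailed proof in \cite{TVlocal1}): Lindeberg entry-by-entry swapping with a target of $O(n^{-5/2+O(c_0)})$ per off-diagonal swap and $O(n^{-3/2+O(c_0)})$ per diagonal swap, a fourth-order Taylor expansion of $F$ in the swapped entry whose coefficients are independent of that entry so that the matched moments cancel in the difference, derivative bounds for $\lambda_i$ and $Q_i$ via the Hadamard variation formulae combined with eigenvector delocalisation, the local semicircle law and the $Q$-truncation, and a continuity argument to propagate the truncation event to intermediate values of the interpolation parameter. The only cosmetic difference is that you Taylor-expand directly in the two real coordinates of the complex entry, whereas the paper reduces the exposition to the real symmetric case and notes in a footnote that the complex case is handled by exactly this two-variable (or separate real/imaginary swap) modification.
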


We will discuss the proof of this theorem shortly.  Using Theorem \ref{gap}, one can then deduce Theorem \ref{theorem:Four} from Theorem \ref{trunc} by smoothly truncating in the $Q$ variables: see \cite[\S 3.3]{TVlocal1}.

It remains to establish Theorem \ref{trunc}.  To simplify the exposition slightly, let us assume that the matrices $M_n, M'_n$ are real symmetric rather than Hermitian.  

As indicated in Section \ref{swap-sec}, the basic idea is to use the Lindeberg exchange strategy.  To illustrate the idea, let $\tilde M_n$ be the matrix formed from $M_n$ by replacing a single entry $\xi_{pq}$ of $M_n$ with the corresponding entry $\xi'_{pq}$ of $M'_n$ for some $p<q$, with a similar swap also being performed at the $\xi_{qp}$ entry to keep $\tilde M_n$ Hermitian.  Strictly speaking, $\tilde M_n$ is not a Wigner matrix as defined in Definition \ref{def:Wignermatrix}, as the entries are no longer identically distributed, but this will not significantly affect the arguments.  (One also needs to perform swaps on the diagonal, but this can be handled in essentially the same manner.)

Set $\tilde A_n := \sqrt{n} \tilde M_n$ as usual.  
We will sketch the proof of the claim that
\begin{align*}
& \E G( \lambda_{i_1}(A_n),\ldots,\lambda_{i_k}(A_n),Q_{i_1}(A_n),\ldots,Q_{i_k}(A_n)) \\
&\quad = 
  \E G( \lambda_{i_1}(\tilde A_n),\ldots,\lambda_{i_k}(\tilde A_n),Q_{i_1}(\tilde A_n),\ldots,Q_{i_k}(\tilde A_n)) + O(n^{-5/2+O(c_0)};
  \end{align*}
by telescoping together $O(n^2)$ estimates of this sort one can establish \eqref{beg}.  (For swaps on the diagonal, one only needs an error term of $O(n^{-3/2+O(c_0)})$, since there are only $O(n)$ swaps to be made here rather than $O(n^2)$.  This is ultimately why there are two fewer moment conditions on the diagonal than off it.)

We can write $A_n = A(\xi_{pq})$, $\tilde A_n = A(\xi'_{pq})$, where 
$$
A(t) = A(0) + t A'(0)
$$
is a (random) Hermitian matrix depending linearly\footnote{If we were working with Hermitian matrices rather than real symmetric matrices, then one could either swap the real and imaginary parts of the $\xi_{ij}$ separately (exploiting the hypotheses that these parts were independent), or else repeat the above analysis with $t$ now being a complex parameter (or equivalently, two real parameters) rather than a real one.  In the latter case, one needs to replace all instances of single variable calculus below (such as Taylor expansion) with double variable calculus, but aside from notational difficulties, it is a routine matter to perform this modification.}  on a real parameter $t$, with $A(0)$ being a Wigner matrix with one entry (and its adjoint) zeroed out, and $A'(0)$ is the explicit elementary Hermitian matrix 
\begin{equation}\label{apo}
A'(0) = e_p e_q^* + e_p^* e_q.
\end{equation}
We note the crucial fact that the random matrix $A(0)$ is independent of both $\xi_{pq}$ and $\xi'_{pq}$.  Note from Condition {\condo} that we expect $\xi_{pq}, \xi'_{pq}$ to have size $O(n^{O(c_0)})$ most of the time, so we should (heuristically at least) be able to restrict attention to the regime $t = O(n^{O(c_0)})$.  If we then set
\begin{equation}\label{ftt}
 F(t) := \E G( \lambda_{i_1}(A(t)),\ldots,\lambda_{i_k}(A(t)),Q_{i_1}(A(t)),\ldots,Q_{i_k}(A(t))) 
\end{equation}
then our task is to show that
\begin{equation}\label{fxij}
 \E F(\xi_{pq}) = \E F(\xi'_{pq}) + O(n^{-5/2+O(c_0)}).
\end{equation}

Suppose that we have Taylor expansions of the form
\begin{equation}\label{lait}
 \lambda_{i_l}(A(t)) = \lambda_{i_l}(A(0)) + \sum_{j=1}^4 c_{l,j} t^j + O( n^{-5/2+O(c_0)} )
\end{equation}
for all $t = O(n^{O(c_0)})$ and $l=1,\ldots,k$, where the Taylor coefficients $c_{l,j}$ have size $c_{l,j} = O(n^{-j/2+O(c_0)}$, and similarly for the quantities $Q_{i_l}(A(t))$.  Then by using the hypothesis \eqref{geo} and further Taylor expansion, we can obtain a Taylor expansion
$$
F(t) = F(0) + \sum_{j=1}^4 f_j t^j + O( n^{-5/2+O(c_0)} )
$$
for the function $F(t)$ defined in \eqref{ftt}, where the Taylor coefficients $f_j$ have size $f_j = O(n^{-j/2+O(c_0)})$.  Setting $t$ equal to $\xi_{pq}$ and taking expectations, and noting that the Taylor coefficients $f_j$ depend only on $F$ and $A(0)$ and is thus independent of $\xi_{ij}$, we conclude that
$$ \E F(\xi_{pq}) = \E F(0) + \sum_{j=1}^4 (\E f_j) (\E \xi_{pq}^j) + O( n^{-5/2+O(c_0)} )$$
and similarly for $\E F(\xi'_{pq})$.  If $\xi_{pq}$ and $\xi'_{pq}$ have matching moments to fourth order, this gives \eqref{fxij}.  

It remains to establish \eqref{lait} (as well as the analogue for $Q_{i_l}(A(t))$, which turns out to be analogous).  We abbreviate $i_l$ simply as $i$.  By Taylor's theorem with remainder, it would suffice to show that 
\begin{equation}\label{lat}
 \frac{d^j}{dt^j} \lambda_i(A(t)) = O( n^{-j/2+O(c_0)} )
\end{equation}
for $j=1,\ldots,5$. As it turns out, this is not quite true as stated, but it becomes true (with overwhelming probability\footnote{Technically, each value of $t$ has a different exceptional event of very small probability for which the estimates fail.  Since there are uncountably many values of $t$, this could potentially cause a problem when applying the union bound.  In practice, though, it turns out that one can restrict $t$ to a discrete set, such as the multiples of $n^{-100}$, in which case the union bound can be applied without difficulty.  See \cite{TVlocal1} for details.}) if one can assume that $Q_i(A(t))$ is bounded by $n^{O(c_0)}$.  In principle, one can reduce to this case due to the restriction \eqref{qii} on the support of $G$, although there is a technical issue because one will need to establish the bounds \eqref{lat} for values of $t$ other than $\xi_{pq}$ or $\tilde \xi_{pq}$.  This difficulty can be overcome by a continuity argument; see \cite{TVlocal1}.  For the purposes of this informal discussion, we shall ignore this issue and simply assume that we may restrict to the case where
\begin{equation}\label{qiat}
Q_i(A(t)) \ll n^{O(c_0)}.
\end{equation}
In particular, the eigenvalue $\lambda_i(A(t))$ is simple, which ensures that all quantities depend smoothly on $t$ (locally, at least).

To prove \eqref{lat}, one can use the classical \emph{Hadamard variation formulae} for the derivatives of $\lambda_i(A(t))$, which can be derived for instance by repeatedly differentiating the eigenvector equation $A(t) u_i(A(t)) = \lambda_i(A(t)) u_i(A(t))$.  The formula for the first derivative is
$$ \frac{d}{dt} \lambda_i(A(t)) = u_i(A(t))^* A'(0) u_i(A(t)).$$
But recall from eigenvalue delocalisation (Corollary \ref{deloc}) that with overwhelming probability, all coefficients of $u_i(A(t))$ have size $O(n^{-1/2+o(1)})$; given the nature
of the matrix \eqref{apo}, we can then obtain \eqref{lat} in the $j=1$ case.

Now consider the $j=2$ case.  The second derivative formula reads
$$ \frac{d^2}{dt^2} \lambda_i(A(t)) = - 2 \sum_{j \neq i} \frac{|u_i(A(t))^* A'(0) u_j(A(t))|^2}{\lambda_j(A(t)) - \lambda_i(A(t))}.$$
Using eigenvalue delocalisation as before, we see with overwhelming probability that the numerator is $O(n^{-1+o(1)})$.  To deal with the denominator, one has to exploit the hypothesis \eqref{qiat} and the local semicircle law (Theorem \ref{lsc}).  Using these tools, one can conclude \eqref{lat} in the $j=2$ case with overwhelming probability.

It turns out that one can continue this process for higher values of $j$, although the formulae for the derivatives for $\lambda_i(A(t))$ (and related quantities, such as $P_i(A(t))$ and $Q_i(A(t))$) become increasingly complicated, being given by a certain recursive formula in $j$.  See \cite{TVlocal1} for details.

\section{Distribution of individual eigenvalues}\label{dist-sec}

One of the simplest applications of the above machinery is to extend the central limit theorem \eqref{gustav} of Gustavsson \cite{Gus} for eigenvalues $\lambda_i(A_n)$ in the bulk from GUE to more general ensembles:

\begin{theorem}\label{gust}  The gaussian fluctuation law \eqref{gustav} continues to hold for Wigner matrices obeying Condition \condo, and whose atom distributions match that of GUE to second order on the diagonal and fourth order off the diagonal; thus, one has
$$
 \frac{\lambda_i(A_n) - \lambda_i^\cl(A_n)}{\sqrt{\log n/2\pi} / \rho_\sc(u)} \to N(0,1)_\R
$$ 
whenever $\lambda_i^\cl(A_n)=n(u+o(1))$ for some fixed $-2 < u < 2$.
\end{theorem}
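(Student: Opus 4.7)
The plan is to transfer Gustavsson's CLT \eqref{gustav} for GUE to the general ensemble $M_n$ via the Four Moment Theorem. Take $M'_n$ to be a GUE matrix; by Example \ref{gue} it obeys Condition \condo, and by hypothesis the pair $(M_n, M'_n)$ satisfies the moment matching required in Theorem \ref{theorem:Four}. Fix $x \in \R$ and set
\[
s_n := \lambda_i^\cl(A_n) + \frac{x}{\rho_\sc(u)}\sqrt{\tfrac{\log n}{2\pi}},
\]
which is deterministic and the same for $A_n$ and $A'_n$ (the classical location depends only on $i$ and $n$). Since the limiting CDF $\Phi$ is continuous, it suffices to prove $\P(\lambda_i(A_n) \leq s_n) \to \Phi(x)$.

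The main obstacle is that Theorem \ref{theorem:Four} only compares expectations of smooth test functions satisfying the derivative bound \eqref{G-deriv}, whereas the CDF is built from a discontinuous indicator. I would resolve this by a smooth sandwich. Let $c_0 > 0$ be the constant from Theorem \ref{theorem:Four}, set $\delta := n^{-c_0/6}$, and fix a smooth monotone function $\chi : \R \to [0,1]$ with $\chi = 1$ on $(-\infty, 0]$ and $\chi = 0$ on $[1, \infty)$. Define
\[
G_+(t) := \chi\!\Bigl(\frac{t - s_n}{\delta}\Bigr), \qquad G_-(t) := \chi\!\Bigl(\frac{t - s_n + \delta}{\delta}\Bigr),
\]
so that $G_-(t) \leq \mathbf{1}_{t \leq s_n} \leq G_+(t)$ pointwise and $|\nabla^j G_\pm(t)| = O(\delta^{-j}) \leq n^{c_0}$ for $0 \leq j \leq 5$, matching \eqref{G-deriv}.

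Applying Theorem \ref{theorem:Four} with $k = 1$ and $G = G_\pm$ gives
\[
\E G_\pm(\lambda_i(A_n)) = \E G_\pm(\lambda_i(A'_n)) + O(n^{-c_0}),
\]
and combining with the pointwise sandwich yields
\[
\E G_-(\lambda_i(A'_n)) - O(n^{-c_0}) \leq \P(\lambda_i(A_n) \leq s_n) \leq \E G_+(\lambda_i(A'_n)) + O(n^{-c_0}).
\]
For the GUE matrix, $\E G_\pm(\lambda_i(A'_n))$ differs from $\P(\lambda_i(A'_n) \leq s_n)$ by at most $\P(s_n - \delta < \lambda_i(A'_n) \leq s_n + \delta)$, which in the Gustavsson-normalised variable becomes the probability of lying in an interval of length $O(\delta/\sqrt{\log n}) = o(1)$ around $x$; by \eqref{gustav} and the continuity of $\Phi$ this tends to $0$. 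Since \eqref{gustav} also gives $\P(\lambda_i(A'_n) \leq s_n) \to \Phi(x)$, the sandwich closes and yields $\P(\lambda_i(A_n) \leq s_n) \to \Phi(x)$. The quantitative choice $\delta = n^{-c_0/6}$ balances the smoothing error (negligible on the $\sqrt{\log n}$ fluctuation scale) against the Four Moment derivative bound, and the error $O(n^{-c_0})$ is absorbed harmlessly.
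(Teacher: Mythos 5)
Your argument is correct and is essentially the paper's own proof: both transfer Gustavsson's GUE law via the Four Moment Theorem with $k=1$, sandwiching the indicator by smooth cutoffs whose transition width is a small negative power of $n$ (so the derivative bound \eqref{G-deriv} holds), and then noting that on the GUE side the polynomially small smoothing window is negligible against the $\sqrt{\log n}$ fluctuation scale. The only cosmetic difference is that you work with the CDF at a single point $s_n$ while the paper phrases the sandwich for intervals $I=[a,b]$ with enlargements/shrinkings $I_\pm$ of width $n^{-c_0/10}$.
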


\begin{proof}  Let $M'_n$ be drawn from GUE, thus by \eqref{gustav} one already has
$$
 \frac{\lambda_i(A'_n) - \lambda_i^\cl(A_n)}{\sqrt{\log n/2\pi} / \rho_\sc(u)} \to N(0,1)_\R
$$ 
(note that $\lambda_i^\cl(A_n) = \lambda_i^\cl(A'_n)$.  To conclude the analogous claim for $A_n$, it suffices to show that
\begin{equation}\label{lama}
\P( \lambda_i( A'_n ) \in I_- ) - n^{-c_0} \leq
\P( \lambda_i( A_n ) \in I ) \leq \P( \lambda_i( A'_n ) \in I_+ ) + n^{-c_0}
\end{equation}
for all intervals $I = [a,b]$, and $n$ sufficiently large, where $I_+ := [a-n^{-c_0/10}, b+n^{-c_0/10}]$ and $I_- := [a+n^{-c_0/10},b-n^{-c_0/10}]$.

We will just prove the second inequality in \eqref{lama}, as the first is very similar.
We define a smooth bump function $G:\R \to \R^+$ equal to one on $I_-$ and vanishing outside of $I_+$.  Then we have
$$ \P( \lambda_i( A_n ) \in I ) \leq \E G( \lambda_i(A_n) ) $$
and
$$ \E G( \lambda_i(A'_n) ) \leq \P( \lambda_i( A'_n ) \in I )$$
On the other hand, one can choose $G$ to obey \eqref{G-deriv}.  Thus by Theorem \ref{theorem:Four} we have
$$ |\E G( \lambda_i(A_n) ) - \E G( \lambda_i(A'_n) )| \leq n^{-c_0}$$
and the second inequality in \eqref{lama} follows from the triangle inequality.  The first inequality is similarly proven using a smooth function that equals $1$ on $I_-$ and vanishes outside of $I$.
\end{proof}

\begin{remark}  In \cite{Gus} the asymptotic joint distribution of $k$ distinct eigenvalues $\lambda_{i_1}(M_n),\ldots,\lambda_{i_k}(M_n)$ in the bulk of a GUE matrix $M_n$ was computed (it is a gaussian $k$-tuple with an explicit covariance matrix).  By using the above argument, one can extend that asymptotic for any fixed $k$ to other Wigner matrices, so long as they match GUE to fourth order off the diagonal and to second order on the diagonal.

If one could extend the results in \cite{Gus} to broader ensembles of matrices, such as gauss divisible matrices, then the above argument would allow some of the moment matching hypotheses to be dropped, using tools such as Lemma \ref{match}.
\end{remark}

\begin{remark} Recently in \cite{Doering}, a moderate deviations property of the distribution of the eigenvalues $\lambda_i(A_n)$ was established first for GUE, and then extended to the same class of matrices considered in Theorem \ref{gust} by using the Four Moment Theorem.  An analogue of Theorem \ref{gust} for real symmetric matrices (using GOE instead of GUE) was established in \cite{rourke}.
\end{remark}

There are similar results at the edge of the spectrum, though with several additional technicalities; see \cite{Sos1,  Ruz, Khor, TVlocal2, Joh2, EYY2}.

\section{The Wigner-Dyson-Mehta conjecture}

We now consider the extent to which the asymptotic \eqref{k-asym}, which asserts that the normalised $k$-point correlation functions $\rho_{n,u}^{(k)}$ converge to the universal limit $\rho^{(k)}_\Dyson$, can be extended to more general Wigner ensembles.  A long-standing conjecture of Wigner, Dyson, and Mehta (see e.g. \cite{Meh}) asserts (informally speaking) that \eqref{k-asym} is valid for all fixed $k$, all Wigner matrices and all fixed energy levels $-2 < u < 2$ in the bulk.  However, to make this conjecture precise one has to specify the nature of convergence in \eqref{k-asym}.  For GUE, the convergence is quite strong (in the local uniform sense), but one cannot expect such strong convergence in general, particularly in the case of discrete ensembles in which $\rho_{n,u}^{(k)}$ is a discrete probability distribution (i.e. a linear combination of Dirac masses) and thus is unable to converge uniformly or pointwise to the continuous limiting distribution $\rho^{(k)}_\Dyson$.  We will thus instead settle for the weaker notion of \emph{vague convergence}.  More precisely, we say that \eqref{k-asym} holds in the vague sense if one has
\begin{equation}\label{test}
 \int_{\R^k} F(x_1,\ldots,x_k) \rho^{(k)}_{n,u}(x_1,\ldots,x_k)\ dx_1 \ldots dx_k = \int_{\R^k} F(x_1,\ldots,x_k) \rho^{(k)}_{\Dyson}(x_1,\ldots,x_k)\ dx_1 \ldots dx_k
 \end{equation}
for all continuous, compactly supported functions $F: \R^k \to \R$.  By the Stone-Weierstrass theorem we may take $F$ to be a test function (i.e. smooth and compactly supported) without loss of generality.  

The Wigner-Dyson-Mehta conjecture is largely resolved in the vague convergence category:

\begin{theorem}[Wigner-Dyson-Mehta conjecture in the vague sense]\label{vague-o}   Let $M_n$ be a Wigner matrix obeying Condition \condo, and let $-2 < u < 2$ and $k \geq 1$ be fixed.  Then \eqref{k-asym} holds in the vague sense.
\end{theorem}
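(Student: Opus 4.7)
The plan is to compare $M_n$ with a Gauss divisible Wigner matrix for which vague convergence is already known, using the Four Moment Theorem to transfer the information. By symmetrisation we may take the test function $F \colon \R^k \to \R$ to be symmetric. Combining \eqref{rfk} with the rescaling \eqref{roo} shows that the vague-convergence assertion \eqref{test} is equivalent to
\[
\rho_{\sc}(u)^k\, k! \sum_{i_1 < \cdots < i_k} \E\,\tilde F\bigl(\lambda_{i_1}(A_n),\ldots,\lambda_{i_k}(A_n)\bigr) \;\longrightarrow\; \int_{\R^k} F\, \rho^{(k)}_{\Dyson}\, dx_1\cdots dx_k,
\]
where $\tilde F(y_1,\ldots,y_k) := F\bigl(\rho_{\sc}(u)(y_1 - nu),\ldots,\rho_{\sc}(u)(y_k - nu)\bigr)$, so it suffices to analyse this eigenvalue sum.

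Next I would introduce the auxiliary Wigner matrix
\[
M_n' := e^{-t/2} M_n'' + (1-e^{-t})^{1/2} G_n, \qquad t = n^{-1+\delta},
\]
for a small fixed $\delta > 0$, where $G_n$ is a GUE matrix independent of $M_n''$, and $M_n''$ is a Wigner matrix obeying Condition \condo\ whose atom distributions are chosen, via a standard moment-matching construction, so that the entries of $M_n'$ match those of $M_n$ to order $4$ off the diagonal and to order $2$ on the diagonal. Since $M_n'$ is Gauss divisible with time parameter $t \geq n^{-1+\delta}$, the result of \cite{EPRSY} gives the vague convergence $\rho^{(k)}_{n,u}(A_n') \to \rho^{(k)}_{\Dyson}$. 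The problem therefore reduces to showing that the eigenvalue sum above is unchanged, up to $o(1)$, when $A_n$ is replaced by $A_n' := \sqrt{n}\, M_n'$.

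For this comparison, fix $\eps > 0$ with $k\eps < c_0$, where $c_0$ is the constant from Theorem \ref{theorem:Four}. Since $F$ has compact support, the summand vanishes unless $\lambda_{i_1}(A_n),\ldots,\lambda_{i_k}(A_n)$ all lie in a fixed bounded interval around $nu$. By the rigidity estimate \eqref{eigenrigid}, on an event of overwhelming probability every such index $i_l$ belongs to a window $W$ of size $O(n^\eps)$ centred at the classical index $i_0(u,n)$ at energy $nu$; the contribution from the complementary event is trivially bounded by $\|\tilde F\|_\infty \binom{n}{k} n^{-A} = o(1)$ for any $A$, and similarly for $A_n'$. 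For each of the $O(n^{k\eps})$ tuples in $W$, applying Theorem \ref{theorem:Four} with $G := \tilde F$ (whose derivatives are bounded by absolute constants, so \eqref{G-deriv} is satisfied for $n$ large) yields a per-tuple error of $O(n^{-c_0})$, which sums to $O(n^{k\eps - c_0}) + o(1) = o(1)$ by the choice of $\eps$.

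The main obstacle is the accounting in the last step: the Four Moment Theorem saves only $n^{-c_0}$ per tuple, while the sum a priori ranges over $\binom{n}{k}$ tuples, so without rigidity-based localisation of the relevant indices to a polynomially-thin window the aggregated error would be useless. The other technical ingredient, the moment-matching construction of $M_n''$, is standard: given Condition \condo, one can find an atom $\xi''$ with good tails such that $e^{-t/2}\xi'' + (1-e^{-t})^{1/2} g$ matches the first four moments of $\xi$, with the analogous adjustment on the diagonal accounting for the variance $\sigma^2$.
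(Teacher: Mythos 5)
Your overall strategy (compare with a gauss divisible ensemble at time $t=n^{-1+\delta}$ for which \cite{EPRSY} gives the sine-kernel asymptotics, rewrite the vague-convergence statement via \eqref{rfk} as a sum over $k$-tuples of eigenvalues, localise the relevant indices to a window of size $O(n^{\eps})$ using the rigidity estimate \eqref{eigenrigid}, and apply the Four Moment Theorem tuple by tuple with total error $O(n^{k\eps-c_0})$) is essentially the route the paper takes, and that part of the accounting is correct. The genuine gap is in the moment-matching step. You ask for a Wigner matrix $M_n''$ such that $\xi' := e^{-t/2}\xi''+(1-e^{-t})^{1/2}g$ matches the off-diagonal atom variable $\xi$ of $M_n$ \emph{exactly} to order $4$. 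Such a $\xi'$ is gauss divisible, and no gauss divisible random variable can match a two-point distribution to fourth order: if $\xi$ is supported on $\{a,b\}$ then $\E(\xi-a)^2(\xi-b)^2=0$, and this expectation is determined by the first four moments, so any fourth-order match $\xi'$ satisfies $\E(\xi'-a)^2(\xi'-b)^2=0$ and is itself supported on $\{a,b\}$, contradicting the presence of a Gaussian component. This is exactly the obstruction recorded after Lemma \ref{match}, and it bites in the most interesting instances of the theorem (e.g.\ Bernoulli/sign entries, which obey Condition \condo). So your "standard moment-matching construction" only exists when the off-diagonal atom distribution is supported on at least three points, in which case your argument reduces to the paper's first observation (Lemma \ref{match} combined with \cite{EPRSY} and Theorem \ref{theorem:Four}); the general case is not covered.

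The paper closes this gap differently: it takes $M_n''=M_n$ itself, i.e.\ it compares $M_n$ with its own gauss divisible regularisation $M^t_n=e^{-t/2}M_n+(1-e^{-t})^{1/2}G_n$ at $t=n^{-1+\eps}$. The first four moments of the entries of $M^t_n$ then match those of $M_n$ only \emph{approximately}, up to $O(t)=O(n^{-1+\eps})$, and one must check from the Lindeberg-swapping proof of Theorem \ref{theorem:Four} (not from its statement, which demands exact matching) that such an approximate match suffices: the mismatch in the third and fourth moments contributes roughly $O(t\cdot n^{-3/2+O(c_0)})$ per off-diagonal swap, hence $o(1)$ after summing over the $O(n^2)$ swaps. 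To repair your proposal you would either have to restrict to atom distributions supported on at least three points, or replace the black-box use of Theorem \ref{theorem:Four} by this approximate-moment version, which is the substantive extra ingredient in the paper's proof.
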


This theorem, proven in \cite{TVmeh}, builds upon a long sequence of partial results towards the Wigner-Dyson-Mehta conjecture \cite{Joh1, ERSY, EPRSY, TVlocal1, ERSTVY, EYY, EYY2}, which we will summarise (in a slightly non-chronological order) below.  As recalled in Section \ref{gue-sec}, the asymptotic \eqref{k-asym} for GUE (in the sense of locally uniform convergence, which is far stronger than vague convergence) follows as a consequence of the Gaudin-Mehta formula and the Plancherel-Rotach asymptotics for Hermite polynomials\footnote{Analogous results are known for much wider classes of invariant random matrix ensembles, see e.g. \cite{DKMVZ}, \cite{PS}, \cite{BI}.  However, we will not discuss these results further here, as they do not directly impact on the case of Wigner ensembles.}.  

The next major breakthrough was by Johansson \cite{Joh1}, who, as discussed previously, established \eqref{k-asym} for gauss divisible ensembles at some fixed time parameter $t>0$ independent of $n$, obtained \eqref{k-asym} in the vague sense (in fact, the slightly stronger convergence of \emph{weak convergence} was established in that paper, in which the function $F$ in \eqref{test} was allowed to merely be $L^\infty$ and compactly supported, rather than continuous and compactly supported).    The main tool used in \cite{Joh1} was an explicit determinantal formula for the correlation functions in the gauss divisible case, essentially due to Br\'ezin and Hikami \cite{brezin}.

In Johansson's result, the time parameter $t > 0$ had to be independent of $n$.  It was realized by Erd\H{o}s, Ramirez, Schlein, and Yau that one could obtain many further cases of the Wigner-Dyson-Mehta conjecture if one could extend Johansson's result to much shorter times $t$ that decayed at a polynomial rate in $n$.  This was first achieved (again in the context of weak convergence) for $t > n^{-3/4+\eps}$ for an arbitrary fixed $\eps>0$ in \cite{ERSY}, and then to the essentially optimal case $t > n^{-1+\eps}$ (for weak convergence, and (implicitly) in the local $L^1$ sense as well) in \cite{EPRSY}.  By combining this with the method of reverse heat flow discussed in Section \ref{swap-sec}, the asymptotic \eqref{k-asym} (again in the sense of weak convergence) was established for all Wigner matrices whose distribution obeyed certain smoothness conditions (e.g. when $k=2$ one needs a $C^6$ type condition), and also decayed exponentially.  The methods used in \cite{EPRSY} were an extension of those in \cite{Joh1}, combined with an approximation argument (the ``method of time reversal'') that approximated a continuous distribution by a gauss divisible one (with a small value of $t$); the arguments in \cite{ERSY} are based instead on an analysis of the Dyson Brownian motion.

By combining the above observation with the moment matching lemma presented below, one immediately concludes Theorem \ref{vague-o} assuming that the off-diagonal atom distributions are supported on at least three points.

\begin{lemma}[Moment matching lemma]\label{match}  Let $\xi$ be a real random variable with mean zero, variance one, finite fourth moment, and which is supported on at least three points.  Then there exists a gauss divisible, exponentially decaying real random variable $\xi'$ that matches $\xi$ to fourth order.
\end{lemma}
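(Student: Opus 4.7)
The idea is to set
\[
\xi' = \sqrt{1-s^2}\,\eta + s g,
\]
where $g \sim N(0,1)_\R$ is independent of a bounded real-valued random variable $\eta$, and $s \in (0,1)$ is a small parameter. Setting $t := -\log(1-s^2) > 0$ gives $\sqrt{1-s^2} = e^{-t/2}$ and $s = (1-e^{-t})^{1/2}$, so $\xi'$ is gauss divisible with $\eta$ as the $t=0$ entry; boundedness of $\eta$ combined with the Gaussian tails of $g$ makes $\xi'$ sub-Gaussian, and in particular satisfies the exponential decay required by Condition \condo. Writing $\mu_k := \E[\xi^k]$ (so $\mu_1 = 0$, $\mu_2 = 1$) and imposing $\E[\eta] = 0$, $\E[\eta^2] = 1$, the first two moments of $\xi'$ automatically equal $0$ and $1$; expanding $(\sqrt{1-s^2}\,\eta + sg)^k$ for $k = 3, 4$ using $\E[g] = \E[g^3] = 0$, $\E[g^2] = 1$, $\E[g^4] = 3$ reduces matching to fourth order to the two scalar conditions
\[
\E[\eta^3] = \frac{\mu_3}{(1-s^2)^{3/2}}, \qquad \E[\eta^4] = \frac{\mu_4 - 3}{(1-s^2)^2} + 3,
\]
which depend continuously on $s$ and take the values $\mu_3$, $\mu_4$ at $s = 0$.

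\textbf{Existence of $\eta$.} By the truncated Hamburger moment problem on $\R$, the moment sequence $(1, 0, 1, m_3, m_4)$ is realized by a probability measure supported on at most three points whenever the Hankel matrix
\[
 H(m_3, m_4) := \begin{pmatrix} 1 & 0 & 1 \\ 0 & 1 & m_3 \\ 1 & m_3 & m_4 \end{pmatrix}
\]
is positive definite; checking leading principal minors, this is equivalent to the single scalar inequality $m_4 - m_3^2 - 1 > 0$. Applying Cauchy--Schwarz to the mean-zero random variables $\xi$ and $\xi^2 - 1$ gives $\mu_3^2 \leq \mu_4 - 1$, with equality forcing $\xi$ and $\xi^2 - 1$ to be linearly dependent, i.e.\ $\xi$ to satisfy a quadratic equation and therefore take at most two values. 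The hypothesis that $\xi$ is supported on at least three points thus yields the \emph{strict} inequality $\mu_4 - \mu_3^2 - 1 > 0$, and by continuity the same inequality persists when $(m_3, m_4)$ are replaced by the targets from the first paragraph, provided $s > 0$ is sufficiently small. A three-point distribution $\eta$ with the required moments therefore exists, and is in particular bounded, completing the construction.

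\textbf{Main obstacle.} The only nontrivial input is the \emph{strict} Cauchy--Schwarz inequality $\mu_3^2 < \mu_4 - 1$ produced from the three-point support hypothesis; without strictness one cannot perturb off $s = 0$ while keeping the Hankel matrix positive definite, so the three-point hypothesis is precisely what gives the room to absorb the Gaussian smoothing. Everything else is either elementary algebra in $s$ or a direct invocation of the classical truncated moment problem; an explicit alternative would parametrize $\eta$ by three atom locations and weights summing to one and solve the resulting nonlinear $4 \times 4$ system directly, recovering the existence of $\eta$ without the black box but adding computation without conceptual gain.
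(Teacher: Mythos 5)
Your construction is correct: the moment computations for $\xi'=\sqrt{1-s^2}\,\eta+sg$ are right, the determinant of the Hankel matrix is indeed $m_4-m_3^2-1$, and the strict inequality $\mu_3^2<\mu_4-1$ from Cauchy--Schwarz applied to $\xi$ and $\xi^2-1$ (with equality forcing two-point support) is exactly the room needed to perturb to small $s>0$; the finitely-atomic solution of the truncated moment problem then gives a bounded $\eta$, hence exponential decay of $\xi'$. Note that this survey does not prove the lemma itself but defers to \cite[Lemma 28]{TVlocal1}; your argument is in the same spirit as that proof and as the remark following the lemma here (the two-point obstruction $\E(\xi-a)^2(\xi-b)^2=0$ is the degenerate case of your strict inequality), with the truncated Hamburger/Hankel packaging being a clean way to produce the three-point $\eta$ rather than solving for atoms and weights by hand.
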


For a proof of this lemma, see \cite[Lemma 28]{TVlocal1}.  The requirement of support on at least three points is necessary; indeed, if $\xi$ is supported in just two points $a,b$, then $\E (\xi-a)^2 (\xi-b)^2 = 0$, and so any other distribution that matches $\xi$ to fourth order must also be supported on $a,b$ and thus cannot be gauss divisible.  

To remove the requirement that the atom distributions be supported on at least three points, one can observe from the proof of the four moment theorem that one only needs the moments of $M_n$ and $M'_n$ to \emph{approximately} match to fourth order in order to be able to transfer results on the distribution of spectra of $M_n$ to that of $M'_n$.  In particular, if $t = n^{-1+\eps}$ for some small $\eps > 0$, then the gauss divisible matrix $M^t_n$ associated to $M_n$ at time $t$ is already close enough to matching the first four moments of $M_n$ to apply (a version of) the Four Moment Theorem.  The results of \cite{EPRSY} give the asymptotic \eqref{k-asym} for $M^t_n$, and the eigenvalue rigidity property \eqref{eigenrigid} then allows one to transfer this property to $M_n$, giving Theorem \ref{vague-o}.

\begin{remark}\label{chron}  The above presentation (drawn from the most recent paper \cite{TVmeh}) is somewhat ahistorical, as the arguments used above emerged from a sequence of papers, which obtained partial results using the best technology available at the time.  In the paper \cite{TVlocal1}, where the first version of the Four Moment Theorem was introduced, the asymptotic \eqref{k-asym} was established under the additional assumptions of Condition \condo, and matching the GUE to fourth order; the former hypothesis was due to the weaker form of the four moment theorem known at the time, and the latter was due to the fact that the eigenvalue rigidity result \eqref{eigenrigid} was not yet established (and was instead deduced from the results of Gustavsson \cite{Gus} combined with the Four Moment Theorem, thus necessitating the matching moment hypothesis).  For related reasons, the paper in \cite{ERSTVY} (which first introduced the use of an approximate Four Moment Theorem) was only able to establish \eqref{k-asym} after an additional averaging in the energy parameter $u$ (and with Condition \condo).  The subsequent progress in \cite{ESY} via heat flow methods gave an alternate approach to establishing \eqref{k-asym}, but also required an averaging in the energy and a hypothesis that the atom distributions be supported on at least three points, although the latter condition was then removed in \cite{EYY2}.  In a very recent paper \cite{ekyy2}, Condition {\condo} has been relaxed to finite $(4+\eps)^{\operatorname{th}}$ moment of the entries for any fixed $\eps>0$, though still at the cost of averaging in the energy parameter.  Some generalisations in other directions (e.g. to covariance matrices, or to generalised Wigner ensembles with non-constant variances) were also established in \cite{BenP}, \cite{TVlocal3}, \cite{ESYY}, \cite{EYY}, \cite{EYY2}, \cite{ekyy}, \cite{ekyy2}, \cite{wang}.
\end{remark}

\begin{remark} While Theorem \ref{vague-o} is the ``right'' result for discrete Wigner ensembles (except for the hypothesis of Condition \condo, which in view of the results in \cite{ekyy2} should be relaxed significantly), one expects stronger notions of convergence when one has more smoothness hypotheses on the atom distribution; in particular, one should have local uniform convergence of the correlation functions when the distribution is smooth enough.  Some very recent progress in this direction in the $k=1$ case was obtained by Maltsev and Schlein \cite{maltsev}, \cite{maltsev2}.
\end{remark}

\end{document}